\documentclass[12pt]{amsart}
\usepackage[T1]{fontenc}

\usepackage[cp1250]{inputenc}
\usepackage{graphicx}
\usepackage{color}
\usepackage{amsfonts,amsthm,amssymb,mathrsfs,cite} 
\allowdisplaybreaks
\vfuzz2pt 
\hfuzz2pt 
\newtheorem{thm}{Theorem}[section]

\newtheorem{lemma}[thm]{Lemma}

\newtheorem{definition}[thm]{Definition}

\theoremstyle{remark}

\numberwithin{equation}{section}

\newcommand{\eqnum}{\refstepcounter{equation}\textup{\tagform@{\theequation}}}

\topmargin -1.0cm
\textwidth 16.5cm
\textheight 24cm
\oddsidemargin 0.4cm
\evensidemargin 0.7cm

\evensidemargin 0.9cm

\begin{document}

\title[Stochastic hKdVB type equation]
{Martingale solution of stochastic hybrid Korteweg - de Vries - Burgers  equation}

\author[Karczewska]{Anna Karczewska}
\address{Faculty of Mathematics, Computer Science and Econometrics\\ University of Zielona G\'ora, Szafrana 4a, 65-516 Zielona G\'ora, Poland}
 \email{a.karczewska@wmie.uz.zgora.pl} \thanks{}

\author[Szczeci\'nski]{Maciej Szczeci\'nski}
\address{Faculty of Management, Computer Science and Finance\\ Wroc\l{}aw University of Economics, Komandorska 118/120, 53-345 Wroc\l{}aw, Poland }
\email{maciej.szczecinski@ue.wroc.pl}

\date{\today}

\subjclass[2010]{93B05, 93C25, 45D05, 47H08, 47H10}

\keywords{KdV equation, Burgers equation, mild solution}


\begin{abstract}
In the paper, we consider a stochastic hybrid Korteweg - de Vries - Burgers type equation with multiplicative noise in the form of cylindrical Wiener process. We prove the existence of a martingale solution to the equation studied. 
The proof of the existence of the solution is based on two approximations of the considered problem and the compactness method. First, we introduce an auxiliary problem corresponding to the equation studied. Then, we prove the existence of a martingale solution to this problem. Finally, we show that the solution of the auxiliary problem converges, in some sense, to the solution of the equation under consideration.

\end{abstract}

\maketitle	

\section{Introduction} \label{int1}

The deterministc hybrid  Korteweg - de Vries - Burgers (hKdVB for short) equation has been derived by Misra, Adhikary and Shuka \cite{MAS12} and Elkamash and Kourakis \cite{ElkaK} in the context of  shock excitations in multicomponent plasma. The hKdVB equation, derived in stretched coordinates 
$\xi=\epsilon^{\frac{1}{2}}(x-vt),~ \tau=\epsilon^{\frac{3}{2}}t$ ($v$ is the phase velocity of the wave) has the form
\begin{equation} \label{hkdvb}
u_{\tau} +A u\,u_{\xi}+Bu_{3\xi}=Cu_{2\xi}-Du.
\end{equation}
In (\ref{hkdvb}), $u(\xi,\tau)$ represents electrostatic potential or electric field pulse in the reference frame moving with the velocity $v$. Indexes denote partial derivatives, that is, $u_{\tau}=\partial u/\partial\tau$, \linebreak $u_{2\xi}=\partial^2u/\partial\xi^2$ and so on. Constants $A,B,C,D$ are related to parameters describing properties of plasma \cite[Eq.~(27)]{ElkaK}. 

Although the equation (\ref{hkdvb}) was derived for dissipative dispersive waves in multicomponent plasma, it can be applied in several other physical systems, e.g., surface water waves and the motion of optical impulses in fibers.
For some particular values of constants $A,B,C,D$ the hKdVB equation (\ref{hkdvb}) reduces to the particular cases:
\begin{itemize} 
\item the Korteweg - de Vries equation, when $C=D=0$;
\item the damped (dissipative) KdV equation, when $C=0$;
\item the Burgers equation, when $B=D=0$;
\item the KdV-Burgers equation, when $D=0$;
\item the damped Burgers equation, when $B=0$.
\end{itemize}
The term with $A\ne0$ introduces nonlinearity, that with $B\ne0$ is responsible for dispersion, $C\ne0$ supplies diffusive term and $D\ne0$ introduces damping.
All equations of these kinds were widely studied 30-40 years ago, and most of physical ideas have been already understood (see, e.g., Lev Ostrovsky's book \cite{LOst} and references therein). On the other hand, during the last few years, one can observe renewal of interest in this fields, mostly due to extensions to higher order equations. 

Studies of the full generalized hybrid KdB-Burgers equation (\ref{hkdvb}) have appeared in the physical literature only in \cite{ElkaK,MAS12}. Some approximate analytic solutions and several cases of numerical solutions to  (\ref{hkdvb}) were subjects of recent studies in \cite{ElkaKVCRK}. 


The paper deals with a stochastic hybrid Korteweg - de Vries - Burgers type equation.
The presence of stochastic noise has deep physical grounds. In the case of waves in plasma, it can be caused by thermal fluctuations, whereas in the case of water surface waves by air pressure fluctuations due to the wind. To the best of our knowledge, our  
paper is the first one which deals with the stochastic hKdVB equation. 

The main result of the paper, Theorem \ref{P4.1} supplies the existence of a  martingale solution to the equation (\ref{Apr}), which is the stochastic version of the equation (\ref{hkdvb}). 

The idea of the proof of the existence of a martingale solution to (\ref{Apr}) is the following. First, we introduce an auxiliary problem (\ref{par}) which we can call $\varepsilon$-approximation of the equation  (\ref{Apr}). Then, in Lemma \ref{parMart} we prove that the problem (\ref{par}) has a martingale solution. Here we use the Galerkin approximation (\ref{Galerkin}) of (\ref{par}) and the tightness of the family of distributions of the solutions to the approximation (\ref{Galerkin}). Next, in Lemma \ref{szac4.1} we show two estimates used in the proof of Theorem \ref{P4.1}. Lemma \ref{PropCias} guarantees the tightness of the family of distributions of solutions to the problem (\ref{par}) in a proper space. Finally, we prove that the solution to(\ref{par}) converges, in some sense, to the solution of the equation (\ref{Apr}).

The paper is organized as follows. 
In section \ref{istrm} we define the martingale solution to some kind of stochastic hybrid Korteweg - de Vries - Burgers equation (\ref{Apr}) with a multiplicative Wiener noise on the interval $[0,T]$. Then we formulate and prove Theorem~\ref{P4.1}. In the proof, some methods introduced in \cite{Gat} and extended in \cite{Deb} have been adapted to the problem considered. 

In section \ref{dow1}  Lemmas \ref{szac4.1} and \ref{PropCias} used in the proof of Theorem \ref{P4.1} are proved. 
Lemma \ref{szac4.1} contains a version of estimates which are analogous to those presented in \cite{Deb} and \cite{Gat}. 

In section \ref{dow2} we give the detailed proof of Lemma \ref{parMart}. This lemma formulates sufficient conditions for the existence of martingale solutions for $m$-dimensional Galerkin approximation of Korteweg - de Vries - Burgers equation with a multiplicative Wiener noise for arbitrary $m\in \mathrm{N}$.  

\section{Existence of martingale solution} \label{istrm}

Denote
 $X:=[x_{1},x_{2}]\subset\mathbb{R}$, where $-\infty<x_{1}<0<x_{2}<\infty$.
We consider the following initial value problem for hybrid 
 Korteweg - de Vries - Burgers type equation 
\begin{equation}\label{Apr}
\begin{cases}
d u(t,x) + \big( Au(t,x)u_{x}(t,x) + Bu_{3x}(t,x) - Cu_{2x}(t,x) + Du(t,x) \big) d t \\  \hspace{8ex} = \Phi\left(u(t,x)\right) d W(t) \\
u(0,x) = u_{0}(x), \quad x\in X, \quad t\ge 0. \\
\end{cases}
\end{equation}
In (\ref{Apr}), $W(t)$, $t\geq 0$, is a  cylindrical Wiener process adapted to the filtration $\left\{\mathscr{F}_{t}\right\}_{t\geq 0}$, $u_{0} \in L^{2}(X)$ is a deterministic real-valued function. In (\ref{Apr}) $u(\omega,\cdot,\cdot):\mathbb{R}_{+}\times\mathbb{R} \rightarrow \mathbb{R}$ for all $\omega\in\Omega$. We assume that there exists such $\lambda_{X}>0$, that  
\begin{equation}\label{warU}
\left|u(t,x)\right|<\lambda_{X} < \infty \quad \mbox{for~all~} t\in\mathbb{R}_{+} \mbox{~and~all~}  x\in X.
\end{equation} 
This assumption reflects the finiteness of the solutions to the deterministic equation (\ref{hkdvb}) on finite interval $X$ (see, e.g., \cite{MAS12,ElkaKVCRK}). 

By $ H^{1}(X), H^{2}(X), H^{s}(X), s<0$ we denote the Sobolev spaces according to definitions in \cite{Adams}.
We assume that $\Phi: H^{2}(X)\rightarrow L_{0}^{2}(L^{2}(X))$ is a continuous mapping which for all $u\in H^{2}(X)$ fulfills the following conditions:
\begin{equation}\label{W1}
\displaystyle\mathop{\exists}_{\kappa_{1}, \kappa_{2}>0} \quad \left\|\Phi(u(x))\right\|_{L_{0}^{2}(L^{2}(X))} \leq \kappa_{1} \min \left\{\left|u(x)\right|^{2}_{L^{2}(X)}, \left|u(x)\right|_{L^{2}(X)}\right\} + \kappa_{2};
\end{equation}
\begin{align}\label{W2}
& \mbox{there~exist~such~functions}~   a,b\in L^{2}(X) ~\mbox{with~compact support,~that~the~mapping} \\ \nonumber
& u \mapsto \left( \Phi(u)a , \Phi(u)b \right)_{L^{2}(X)} ~\mbox{is~continuous~in~} L^{2}(X).
\end{align}

\begin{definition}
We say that the problem (\ref{Apr}) has a \textbf{martingale solution} on the interval $[0,T]$, $0<T<\infty$, if there exists a stochastic basis $(\Omega,\mathscr{F},\left\{\mathscr{F}_{t}\right\}_{t\geq 0}, \mathbb{P}, \left\{W_{t}\right\}_{t\geq 0} )$, where $\left\{W_{t}\right\}_{t\geq 0}$ is a cylindrical Wiener process, and there exists the process $\left\{u(t,x)\right\}_{t\geq 0}$ adapted to the filtration $\left\{\mathscr{F}_{t}\right\}_{t\geq 0}$ with trajectories in the space 
\begin{equation}\nonumber
L^{\infty}(0,T;L^{2}(X))\cap L^{2}(0,T;L^{2}(X))\cap C(0,T;H^{s}(X)), \quad s<0, \quad \mathbb{P} - a.s.,
\end{equation}
such that
\begin{equation}\nonumber
\begin{aligned}
& \left\langle u(t,x); v(x) \right\rangle + \int_{0}^{t} \left\langle Au(t,x)u_{x}(t,x) + Bu_{3x}(t,x) - Cu_{2x}(t,x) + Du(t,x); v(x) \right\rangle d s \\
& = \left\langle u_{0}(x); v(x)\right\rangle + \left\langle \int_{0}^{t} \Phi(u(s,x)) d W(s); v(x) \right\rangle, \quad \mathbb{P}-a.s. ,
\end{aligned}
\end{equation}
for all $t\in[0,T]$ and $v\in H^{1}(X)$.
\end{definition}
In our consideration we shall assume that the coefficients of the equation (\ref{Apr}) fulfill the following condition
\begin{equation} \label{ccond}
 B,C,D\geq 0\qquad \mbox{with} \qquad 3B \geq A + 1.
\end{equation}
The physical sense of the coefficients $A,B,C,D$ and the fact that $A$ can be positive or negative, see, e.g.~\cite{ElkaK,ElkaKVCRK,MAS12} confirms that the condition (\ref{ccond}) admits for a broad class of physically meaningful equations which contains all particular cases listed in section \ref{int1}.

\begin{thm}\label{P4.1}
If the conditions (\ref{warU})-(\ref{ccond}) hold then for  all real-valued $u_{0}\in L^{2}(X)$ and $0<T<\infty$ there exists a martingale solution to  (\ref{Apr}).
\end{thm}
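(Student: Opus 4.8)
The plan is to construct the solution by the martingale (compactness) method, realizing $u$ as a limit of regularized, finite-dimensional approximations in the spirit of \cite{Gat,Deb}. The obstruction to a direct construction is the third-order dispersive term $Bu_{3x}$, which is skew-adjoint and therefore contributes no smoothing. I would therefore first pass to the auxiliary problem (\ref{par}), obtained by adding a parabolic regularization $\varepsilon u_{4x}$ to the drift for $\varepsilon>0$; the leading operator is now even-order and dissipative. By Lemma \ref{parMart} this regularized equation admits a martingale solution $u^{\varepsilon}$ for each fixed $\varepsilon$, built through the Galerkin scheme (\ref{Galerkin}). The entire argument then hinges on bounds for $u^{\varepsilon}$ that are \emph{uniform} in $\varepsilon$, followed by a compactness extraction and passage to the limit $\varepsilon\to 0$ in the weak formulation.

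The core is the a priori energy estimate, which I would derive by applying the It\^{o} formula to $\|u^{\varepsilon}(t)\|_{L^{2}(X)}^{2}$ (this is the content of Lemma \ref{szac4.1}). After integration by parts the skew-adjoint contributions reduce to boundary terms: the convective term gives $\langle Au^{\varepsilon}u^{\varepsilon}_{x};u^{\varepsilon}\rangle=\tfrac{A}{3}[\,(u^{\varepsilon})^{3}\,]_{x_{1}}^{x_{2}}$ and the dispersive term $\langle Bu^{\varepsilon}_{3x};u^{\varepsilon}\rangle$ is likewise a pure boundary expression, while the Burgers term $-C\langle u^{\varepsilon}_{2x};u^{\varepsilon}\rangle=C\|u^{\varepsilon}_{x}\|^{2}$ and the regularization $\varepsilon\langle u^{\varepsilon}_{4x};u^{\varepsilon}\rangle=\varepsilon\|u^{\varepsilon}_{xx}\|^{2}$ are dissipative, with the favorable sign because $B,C,\varepsilon\ge 0$. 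Here the structural hypothesis (\ref{ccond}), $3B\ge A+1$, together with the pointwise bound (\ref{warU}), is exactly what lets me dominate the boundary contributions of the nonlinear and dispersive terms by the dissipation; the growth condition (\ref{W1}) controls the It\^{o} correction $\|\Phi(u^{\varepsilon})\|_{L_{0}^{2}(L^{2}(X))}^{2}$ by the energy plus a constant. Taking expectations, estimating the martingale part by the Burkholder--Davis--Gundy inequality, and applying Gronwall's lemma then yields bounds on $u^{\varepsilon}$ in $L^{2}(\Omega;L^{\infty}(0,T;L^{2}(X)))$ that are uniform in $\varepsilon$, with an additional $\varepsilon$-weighted control in $L^{2}(0,T;H^{2}(X))$.

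With these estimates I would establish the tightness asserted in Lemma \ref{PropCias}. The uniform $L^{\infty}(0,T;L^{2}(X))$ bound yields spatial compactness through the compact embedding $L^{2}(X)\hookrightarrow H^{s}(X)$ for $s<0$, and an estimate on the time increments of $u^{\varepsilon}$ --- bounding the drift in a negative-order space and the stochastic integral through its second moments --- supplies equicontinuity in time in the manner of the Kolmogorov/Aldous criterion. Prokhorov's theorem then provides a subsequence whose laws converge weakly on $C(0,T;H^{s}(X))$, and the Skorokhod representation theorem furnishes a new stochastic basis carrying copies $\tilde{u}^{\varepsilon}$ converging $\mathbb{P}$-almost surely in $C(0,T;H^{s}(X))$ to a limit $u$, together with associated Wiener processes.

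The final and most delicate step is to identify $u$ as a martingale solution of (\ref{Apr}); I expect this to be the main obstacle. The linear terms pass to the limit using the almost-sure convergence and the uniform bounds, and the regularization $\varepsilon u^{\varepsilon}_{4x}$ vanishes since its contribution is $O(\varepsilon)$. The genuine difficulty is the nonlinear term $Au^{\varepsilon}u^{\varepsilon}_{x}$: weak convergence is not enough, so I would upgrade to \emph{strong} convergence in $L^{2}(0,T;L^{2}(X))$ by interpolating the negative-order path convergence against the uniform $L^{2}$ energy bound, after which $u^{\varepsilon}u^{\varepsilon}_{x}=\tfrac12\big((u^{\varepsilon})^{2}\big)_{x}$ passes to the limit as a distributional derivative tested against $v\in H^{1}(X)$. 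Hypothesis (\ref{W2}) is precisely what guarantees convergence of the stochastic term $\langle\int_{0}^{t}\Phi(u^{\varepsilon})\,dW;v\rangle$ against the compactly supported test functions $a,b$, and a standard martingale-representation argument --- verifying that the candidate limit process has the correct cross-quadratic variation against a dense family of test functions --- confirms that $u$ satisfies the weak formulation driven by the limiting Wiener process. This delivers a martingale solution with trajectories in $L^{\infty}(0,T;L^{2}(X))\cap L^{2}(0,T;L^{2}(X))\cap C(0,T;H^{s}(X))$, as required.
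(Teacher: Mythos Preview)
Your overall architecture --- regularize by (\ref{par}), obtain uniform-in-$\varepsilon$ estimates, extract tightness, pass to the limit via Skorokhod and the martingale representation theorem --- matches the paper. But there is a genuine gap in your a priori estimate, and it propagates to the compactness and limit steps.

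The paper does \emph{not} apply It\^{o} to the plain $L^{2}$ norm. Lemma~\ref{szac4.1} is proved via the weighted functional $F(u)=\int_{X}p(x)\,u^{2}\,dx$ with an increasing weight $p$ satisfying $p'>\alpha_{X}>0$ and $Bp'''+Cp''\le\gamma<-1$. Integration by parts of $\int p\,u\,u_{3x}$ then produces $\tfrac{3}{2}\int p'(x)\,u_{x}^{2}\,dx$ (Kato local smoothing from the dispersion), and the nonlinear term contributes a competing $-\tfrac{1}{2}A\int p'\,u_{x}^{2}$ piece. The structural hypothesis $3B\ge A+1$ is used exactly here, to keep the coefficient $(3B-A-2\varepsilon C_{2})$ of $\int p'\,u_{x}^{2}$ positive; it has nothing to do with boundary contributions. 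This weighted computation is what delivers the crucial uniform-in-$\varepsilon$ bound (\ref{4.1c}),
\[
\mathbb{E}\bigl(|u^{\varepsilon}|^{2}_{L^{2}(0,T;H^{1}(-k,k))}\bigr)\le \tilde{C}_{2}(k),
\]
in addition to the $L^{\infty}L^{2}$ and $\varepsilon$-weighted $H^{2}$ bounds you mention.

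Your unweighted argument cannot produce (\ref{4.1c}). The only $H^{1}$ term you exhibit is $C\|u_{x}\|^{2}$ from the Burgers dissipation, which vanishes when $C=0$ --- a case explicitly allowed by (\ref{ccond}). Moreover, your proposal to control the boundary terms using (\ref{warU}) does not work: $|u|<\lambda_{X}$ says nothing about $u_{x},u_{xx}$ at the endpoints. Without the $H^{1}_{\mathrm{loc}}$ bound, Lemma~\ref{PropCias} fails: Aubin--Lions with only an $L^{2}$ spatial bound gives compactness in $L^{2}(0,T;H^{s})$ for $s<0$, not in $L^{2}(0,T;L^{2}(X))$. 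Your proposed ``interpolation of negative-order path convergence against the uniform $L^{2}$ energy bound'' cannot recover strong $L^{2}$ convergence --- interpolation between $H^{-3}$ and $L^{2}$ lands strictly below $L^{2}$. Consequently the passage to the limit in the nonlinear term $Au^{\varepsilon}u^{\varepsilon}_{x}$ is not justified. The missing idea is precisely the weighted-energy (Kato smoothing) estimate.
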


\begin{proof}
Let $\varepsilon>0$. Consider the following auxiliary problem 
\begin{equation}\label{par}
\begin{cases}
d u^{\varepsilon}(t,x) + [\varepsilon u^{\varepsilon}_{4x}(t,x) + A u^{\varepsilon}(t,x)u^{\varepsilon}_{x}(t,x) + Bu^{\varepsilon}_{3x}(t,x)
- C u^{\varepsilon}_{2x}(t,x)\\
 \hspace{12ex} + Du^{\varepsilon}(t,x) ] d t  =  \Phi\left(u^{\varepsilon}(t,x)\right)d W(t) \\
u^{\varepsilon}_{0}(x) = u^{\varepsilon}(0,x), \quad \varepsilon >0 .
\end{cases}
\end{equation}
In the proof of theorem we shall use the following lemmas.
\begin{lemma}\label{parMart}
For any $\varepsilon > 0$ there exists a martingale solution to the problem (\ref{par}) if the conditions (\ref{W1}), (\ref{W2}) and  (\ref{ccond}) hold.
\end{lemma}
\begin{lemma}\label{szac4.1}
There exists $\varepsilon_{0} > 0$, such that 
\begin{eqnarray}
\label{4.1a}
\exists_{C_{1}>0}\forall_{0 < \varepsilon < \varepsilon_{0}}~  \varepsilon\,\mathbb{E}\left( \left|u^{\varepsilon}(t,x)\right|^{2}_{L^{2}(0,T;H^{2}(\mathbb{R}))} \right)  & \leq \tilde{C}_{1},  \\ 
\label{4.1c} 
\forall_{k\in X_{k}}\exists_{C_{2}(k)>0}\forall_{0 < \varepsilon < \varepsilon_{0}}~ \mathbb{E}\left( \left|u^{\varepsilon}(t,x)\right|^{2}_{L^{2}(0,T;H^{1}(-k,k))} \right)  & \leq \tilde{C}_{2}(k), 
\end{eqnarray}
where $X_{k} = \big\{k>0: \left|k\right| \leq \min\left\{-x_{1}, x_{2}\right\}\big\}$.
\end{lemma}

\begin{lemma}\label{PropCias}
Let  $\mathscr{L}(u^{\varepsilon})$ denote the family of distributions of the solutions $u^{\varepsilon}$ to (\ref{par}).
Then the family  $\mathscr{L}(u^{\varepsilon})$ is tight in $L^{2}(0,T;L^{2}(X))\cap C(0,T;H^{-3}(X))$.
\end{lemma}

Now, substitute in Prohorov's theorem (e.g., see Theorem 5.1 in \cite{Bil}), $S:=L^{2}(0,T;L^{2}(X))\cap C(0,T;H^{-3}(X))$ and $\mathscr{K}:=\left\{\mathscr{L}(u^{\varepsilon})\right\}_{\varepsilon>0}$. Since $\mathscr{K}$ is tight in $S$, then it is sequentially compact, so there exists a subsequence of $\left\{\mathscr{L}(u^{ \varepsilon})\right\}_{\varepsilon>0}$ converging to some measure $\mu$ in  $\bar{\mathscr{K}}$.
Because $\left\{\mathscr{L}(u^{ \varepsilon})\right\}_{\varepsilon>0}$ is convergent, then in Skorohod's theorem (e.g., see Theorem 6.7 in \cite{Bil}) one can substitute $\mu_{\varepsilon}:=\left\{\mathscr{L}(u^{ \varepsilon})\right\}_{\varepsilon>0}$ and $\mu:=\lim_{\varepsilon\rightarrow 0}\mu_{\varepsilon}$. Then there exists a space $(\bar{\Omega}, \bar{\mathscr{F}}, \left\{\bar{\mathscr{F}}_{t}\right\}_{t\geq 0}, \bar{\mathbb{P}})$ and random variables with values in $L^{2}(0,T;L^{2}(X))\cap C(0,T;H^{-3}(X))$, such that $\bar{u}^{ \varepsilon}\rightarrow\!\bar{u}$ in $L^{2}(0,T;L^{2}(X))$ and $\bar{u}^{ \varepsilon}\rightarrow\!\bar{u}$ in $C(0,T;H^{-3}(X))$. Moreover, $\mathscr{L}(\bar{u}^{ \varepsilon})\! = \mathscr{L}(u^{ \varepsilon})$. 

Then due to Lemma \ref{szac4.1}, for any $p\in\mathbb{N}$ there exist constants $\tilde{C}_{1}(p)$, $\tilde{C}_{2}$ such that
\begin{equation}\nonumber
\begin{aligned}
\mathbb{E}(\sup_{t\in[0,T]} \left|\bar{u}^{ \varepsilon}(t,x)\right|_{L^{2}(X)}^{2p}) \leq \tilde{C}_{1}(p), \quad
\mathbb{E}(\left|\bar{u}^{ \varepsilon}(t,x)\right|^{2}_{L^{2}(0,T;H^{2}(X))}) \leq \tilde{C}_{2}
\end{aligned}
\end{equation}
and $\bar{u}^{ \varepsilon}(t,x)\in L^{2}(0,T; H^{1}(-k,k))\cap L^{\infty}(0,T; L^{2}(X)),~  \mathbb{P}-a.s.$ Then one can conclude that
 $\bar{u}^{ \varepsilon}\rightarrow \bar{u}$ weakly in $L^{2}(\bar{\Omega},L^{2}(0,T;H^{1}(-k,k)))$. 

 Let $x\in\mathbb{R}$ be fixed. Denote
\begin{equation}\nonumber
\begin{aligned}
M^{ \varepsilon}(t) := & u^{ \varepsilon}(t,x) - u_{0}^{  \varepsilon}(x) + \int_{0}^{t}\bigg[ \varepsilon u^{\varepsilon}(t,x)_{4x}(t,x) + A u^{ \varepsilon}(t,x)u^{ \varepsilon}_{x}(t,x)\\
&\hspace{22ex} + B u^{ \varepsilon}_{3x}(t,x) - C u^{ \varepsilon}_{2x}(t,x) + Du^{\varepsilon}(t,x) \bigg] d s,  \\
\bar{M}^{ \varepsilon}(t) := & \bar{u}^{ \varepsilon} (t,x) - \bar{u}_{0}^{  \varepsilon}(x) + \int_{0}^{t}\bigg[ A \bar{u}^{ \varepsilon}(t,x)\bar{u}^{ \varepsilon}_{x}(t,x) 
+ B \bar{u}^{ \varepsilon}_{3x}(t,x) - C \bar{u}^{ \varepsilon}_{2x}(t,x) + D\bar{u}^{\varepsilon}(t,x) \bigg] d s.  
\end{aligned}
\end{equation}
Note, that
\begin{equation}\nonumber
\begin{aligned}
M^{ \varepsilon}(t) = &\hspace{1ex} u_{0}^{  \varepsilon}(x) - \int_{0}^{t}\bigg[ \varepsilon u^{\varepsilon}(t,x)_{4x}(t,x) + A u^{ \varepsilon}(t,x)u^{ \varepsilon}_{x}(t,x)
+ B u^{ \varepsilon}_{3x}(t,x) - C u^{ \varepsilon}_{2x}(t,x) \\  & \hspace{13ex}
 + Du^{\varepsilon}(t,x) \bigg] d s  
+ \int_{0}^{t} \left( \Phi \left( u^{ \varepsilon} (s,x) \right) \right) d W (s) \\ & - u_{0}^{  \varepsilon}(x) +  \int_{0}^{t}\bigg[ A u^{ \varepsilon}(t,x)u^{ \varepsilon}_{x}(t,x) 
+ B u^{ \varepsilon}_{3x}(t,x) - C u^{ \varepsilon}_{2x}(t,x) + Du^{\varepsilon}(t,x) \bigg] d s  \\= & 
\int_{0}^{t} \left( \Phi \left( u^{  \varepsilon} (s,x) \right) \right) d W (s). 
\end{aligned}
\end{equation}
So, $M^{ \varepsilon}(t)$, $t\geq 0$, is a square integrable martingale with values in 
 $L^{2}(X)$, adapted to the filtration $\sigma\left\{u^{  \varepsilon}(s,x), 0\leq s \leq t\right\}$ with quadratic variation equal 
\begin{equation}\nonumber
\left[M^{ \varepsilon}\right](t) := \int_{0}^{t}\Phi(u^{  \varepsilon}(s,x))\left[\Phi(u^{  \varepsilon}(s,x))\right]^{*} d s .
\end{equation}
Substitute in the Doob inequality (e.g., see Theorem 2.2 in \cite{Gaw})
 $M_{t}:=M^{  \varepsilon}(t)$ and $p:=2p$. Then
\begin{equation}\label{szacDoob}
\mathbb{E}\left[\left( \sup_{t\in[0,T]} \left|M^{  \varepsilon}(t)\right|_{L^{2}(X)}^{p} \right)\right] \leq \left( \frac{p}{p-1} \right)^{p} \mathbb{E} \left(\left|M^{  \varepsilon}(T)\right|_{L^{2}(X)} \right).
\end{equation}

Assume $0\leq s \leq t \leq T$ and let $\varphi$ be a bounded continuous function on $L^{2}(0,s;L^{2}(X))$ or $C(0,s;H^{-3}(X))$. Let $a,b\in H^{3}_{0}(-k,k)$, $k\in\mathbb{N}$, be arbitrary and fixed. Since $M^{ \varepsilon}(t)$ is a martingale and $\mathscr{L}(\bar{u}^{ \varepsilon}) = \mathscr{L}(u^{ \varepsilon})$, then (see \cite{Gat}, p. 377-378)
\begin{equation}\nonumber
\mathbb{E}  \Big( \left\langle M^{ \varepsilon}(t) - M^{ \varepsilon}(s); a \right\rangle \varphi\left(u^{ \varepsilon}(t,x)\right)\Big) = 0 \quad \mbox{and} \quad
\mathbb{E}  \Big( \left\langle \bar{M}^{ \varepsilon}(t) - \bar{M}^{ \varepsilon}(s); a \right\rangle\varphi\left(\bar{u}^{ \varepsilon}(t,x)\right)\Big) = 0 .
\end{equation}
Moreover
\begin{equation}\nonumber
\begin{aligned}
\mathbb{E} & \bigg\{\bigg[\left\langle  M^{ \varepsilon}(t);a \right\rangle \left\langle M^{ \varepsilon}(t);b \right\rangle - \left\langle M^{ \varepsilon}(s);a \right\rangle \left\langle M^{ \varepsilon}(s);b \right\rangle \\
& - \int_{s}^{t} \left\langle \left[\Phi\left(u^{ \varepsilon}(\xi,x)\right)\right]^{*}a ; \left[\Phi\left(u^{ \varepsilon}(\xi,x)\right)\right]^{*}b \right\rangle d \xi\bigg]\varphi(u^{ \varepsilon}(t,x))\bigg\} = 0 \\ \mbox{and}\quad
\mathbb{E} & \bigg\{\bigg[\left\langle  \bar{M}^{ \varepsilon}(t);a \right\rangle \left\langle \bar{M}^{ \varepsilon}(t);b \right\rangle - \left\langle \bar{M}^{ \varepsilon}(s);a \right\rangle \left\langle \bar{M}^{ \varepsilon}(s);b \right\rangle \\
& - \int_{s}^{t} \left\langle \left[\Phi\left(\bar{u}^{ \varepsilon}(\xi,x)\right)\right]^{*}a ; \left[\Phi\left(\bar{u}^{ \varepsilon}(\xi,x)\right)\right]^{*}b \right\rangle d \xi\bigg]\varphi(\bar{u}^{ \varepsilon}(t,x))\bigg\} = 0 .
\end{aligned}
\end{equation}

Denote $\bar{M}(t) := \bar{u}(t,x)-u_{0}(x) + \int_{0}^{t}\bigg[ A \bar{u}(t,x)\bar{u}_{x}(t,x)
+ B\bar{u}_{3x}(t,x) - C \bar{u}_{2x}(t,x) + D\bar{u}(t,x) \bigg] d s$. If $\varepsilon \rightarrow 0$, then
$ \bar{M}^{ \varepsilon}(t) \rightarrow \bar{M}(t)$ and $\bar{M}^{  \varepsilon}(s) \rightarrow \bar{M}(s)$, $\bar{\mathbb{P}}$ - a.s. in $H^{-3}(X)$. Moreover, since $\varphi$ is continuous,
then $\varphi(\bar{u}^{  \varepsilon}(s,x)) \rightarrow \varphi(\bar{u}(s,x))$, $\bar{\mathbb{P}}$ - a.s. So, if $\varepsilon \rightarrow 0$, then
\begin{equation}\nonumber
\begin{aligned}
\mathbb{E}&  \Big( \left\langle \bar{M}^{  \varepsilon}(t) - \bar{M}^{  \varepsilon}(s); a \right\rangle \varphi(\bar{u}^{  \varepsilon}(t,x))\Big)  \rightarrow \mathbb{E} \Big( \left\langle \bar{M}(t) - \bar{M}(s); a \right\rangle \varphi(\bar{u}(t,x))\Big) .
\end{aligned}
\end{equation}

Moreover, since $\Phi$ is a continuous operator in topology $L^{2}(X)$ and (\ref{szacDoob}) holds, so if $\varepsilon \rightarrow 0$, then
\begin{equation}\nonumber
\begin{aligned}
\left\langle \left(\Phi(\bar{u}^{ \varepsilon}(s,x))\right)^{*}a; \left(\Phi(\bar{u}^{ \varepsilon}(s,x))\right)^{*}b\right\rangle \rightarrow \left\langle \left(\Phi(\bar{u}(s,x))\right)^{*}a; \left(\Phi(\bar{u}(s,x))\right)^{*}b \right\rangle
\end{aligned}
\end{equation}
and
\begin{equation}\nonumber
\begin{aligned}
\mathbb{E} & \bigg\{\bigg[\left\langle  \bar{M}^{ \varepsilon}(t);a \right\rangle \left\langle \bar{M}^{ \varepsilon}(t);b \right\rangle - \left\langle \bar{M}^{ \varepsilon}(s);a \right\rangle \left\langle \bar{M}^{ \varepsilon}(s);b \right\rangle \\
& - \int_{s}^{t} \left\langle \left[\Phi\left(\bar{u}^{ \varepsilon}(s,\xi)\right)\right]^{*}a ; \left[\Phi\left(\bar{u}^{ \varepsilon}(s,\xi)\right)\right]^{*}b \right\rangle d \xi\bigg]\varphi(\bar{u}^{ \varepsilon}(t,x))\bigg\} \\
\rightarrow \mathbb{E} & \bigg\{\bigg[\left\langle  \bar{M}(t);a \right\rangle \left\langle \bar{M}(t);b \right\rangle - \left\langle \bar{M}(s);a \right\rangle \left\langle \bar{M}(s);b \right\rangle \\
& - \int_{s}^{t} \left\langle \left[\Phi\left(\bar{u}(s,\xi)\right)\right]^{*}a ; \left[\Phi\left(\bar{u}(s,\xi)\right)\right]^{*}b \right\rangle d \xi\bigg]\varphi(\bar{u}(t,x))\bigg\}.
\end{aligned}
\end{equation}

Then $\bar{M}(t), t\ge 0,$ is also a square integrable martingale adapted to the filtration \\ $\sigma\left\{\bar{u}(s), 0\leq s \leq t\right\}$ with quadratic variation $\int_{0}^{t} \Phi(\bar{u}(s,x))\left(\Phi(\bar{u}(s,x))\right)^{*} d s$. 

Substitute in the representation theorem (e.g., see Theorem 8.2 in \cite{dPZ}), $M_{t}:=\bar{M}(t)$, $[M_{t}]:=\int_{0}^{t} \Phi(\bar{u}(s,x))\times\left(\Phi(\bar{u}(s,x))\right)^{*} d s$ and $\Phi(s):=\Phi(\bar{u}(s,x))$.

Then there exists a process $\tilde{M}(t) = \int_{0}^{t}\Phi(\bar{u}(s,x))d W(s)$,  that $\tilde{M}(t)=\bar{M}(t)$, $\mathbb{\bar{P}}$ - a.s., and	
\begin{equation}\nonumber
\begin{aligned}
& \bar{u}(t,x) - u_{0}(x) + \int_{0}^{t}\bigg[A \bar{u}(t,x)\bar{u}_{x}(t,x) + B\bar{u}_{3x}(t,x) - C \bar{u}_{2x}(t,x) + D\bar{u}(t,x) \bigg] d s \\
&= \int_{0}^{t}\Phi(\bar{u}(s,x))d W(s) .
\end{aligned}
\end{equation}
This implies that
\begin{equation}\nonumber
\begin{aligned}
& \bar{u}(t,x) = u_{0}(x) - \int_{0}^{t}\bigg[A \bar{u}(t,x)\bar{u}_{x}(t,x) + B\bar{u}_{3x}(t,x) - C \bar{u}_{2x}(t,x) + D\bar{u}(t,x) \bigg]  d s \\
& + \int_{0}^{t}\Phi(\bar{u}(s,x))d W(s) , \\
\end{aligned}
\end{equation}
so $\bar{u}(t,x)$ is a solution to (\ref{Apr}), what finishes the proof of Theorem \ref{P4.1} .
\end{proof}

\section{Proofs of Lemma \ref{szac4.1} and Lemma \ref{PropCias}} \label{dow1}

\begin{proof}[Proof of Lemma  \ref{szac4.1}]
Let $p: \mathbb{R} \rightarrow \mathbb{R}$ be a smooth function fulfilling the following conditions
\begin{itemize}
\item[(i)] $p$ is increasing on $X$;
\item[(ii)] $p(x_{1}) = \delta > 0$;
\item[(iii)] $p'(x) > \alpha_{X}$ for all $x\in X$;
\item[(iv)] $Bp'''(x)+Cp''(x)\leq \gamma <-1$ for all $x\in X$. 
\end{itemize}
Additionally, let $F(u^{\varepsilon}) := \int_{X}p(x)(u^{\varepsilon}(x))^{2}d x$. Application of the It\^o formula for $F(u^{\varepsilon})$ yields the formula
\begin{align} \nonumber
d F(u^{\varepsilon}(t,x)) = & \left\langle F'(u^{\varepsilon}(t,x));\Phi(u^{\varepsilon}(t,x)) \right\rangle d W(t) - \left\langle F'(u^{\varepsilon}(t,x));\varepsilon u^{\varepsilon}_{4x} + Au^{\varepsilon}(t,x)u_{x}^{\varepsilon}(t,x) \right. \\
&\left. + B u^{\varepsilon}_{3x}(t,x) - C u^{\varepsilon}_{2x}(t,x) + D u^{\varepsilon}(t,x)\right\rangle d t \\ \nonumber
&+ \frac{1}{2}\text{Tr}\left\{F''(u^{\varepsilon}(t,x))\Phi(u^{\varepsilon}(t,x))\left[\Phi(u^{\varepsilon}(t,x))\right]^{*}\right\}d t ,
\end{align}
where $\left\langle F'(u^{\varepsilon}(t,x));v(t,x) \right\rangle = 2\int_{X}p(x)u^{\varepsilon}(t,x)v(t,x)d x $ and $F'(u^{\varepsilon}(t,x))v(t,x)  = 2p(x)v(t,x) $.

We will use the following auxiliary result.
\begin{lemma}\label{DebL1}[\cite{Deb}, p.242] There exist positive constants $C_{1}, C_{2}, C_{3}$ such that
\begin{equation}\nonumber
\begin{aligned}
\int_{X} p(x)u^{\varepsilon}(t,x)u^{\varepsilon}_{4x}(t,x) d x \geq & \frac{1}{2}\int_{X}p(x)\left[u^{\varepsilon}_{2x}(t,x)\right]^{2}d x - C_{1}\left|u^{\varepsilon}(t,x)\right|^{2}_{L^{2}(X)} \\
&- C_{2}\int_{X}p'(x)\left[u_{x}(t,x)\right]^{2}d x ; \\
\int_{X} p(x)u^{\varepsilon}(t,x)u^{\varepsilon}_{3x}(t,x) d x = & \frac{3}{2}\int_{X}p'(x)\left[u^{\varepsilon}_{x}(t,x)\right]^{2}d x - \frac{1}{2}\int_{X}p'''(x)\left[u(t,x)\right]^{2}d x ; \\
\int_{X} p(x)\left[u^{\varepsilon}(t,x)\right]^{2}u^{\varepsilon}_{x}(t,x) d x \geq & -C_{3}\left(1+\left|u^{\varepsilon}(t,x)\right|_{L^{2}(X)}^{6}\right) - \frac{1}{2}\int_{X}p'(x)\left[u_{x}(t,x)\right]^{2}d x . \\
\end{aligned}
\end{equation}
\end{lemma}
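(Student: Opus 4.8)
The plan is to obtain all three statements by integration by parts, exploiting the algebraic structure of each integrand together with the prescribed properties of the weight $p$, namely that $p$ is smooth, bounded on $X$ with $p\geq\delta>0$, $p'\geq\alpha_X>0$, and $p'',p'''$ bounded. Throughout I would work in the functional setting of \cite{Deb}, in which the boundary contributions produced by each integration by parts vanish (the relevant traces of $u^{\varepsilon}$ and its derivatives at the endpoints disappear because of the decay/admissible boundary behaviour of the functions considered). Granting this, the two elementary inequalities $|p''|\leq C_2\,p'$ and $(p'')^2/p\leq C_1'$, both immediate from $p'\geq\alpha_X$, $p\geq\delta$ and the boundedness of $p''$, will let me trade weighted lower-order terms for the good quantities $\int_X p\,[u^{\varepsilon}_{2x}]^2\,dx$ and $\int_X p'\,[u^{\varepsilon}_x]^2\,dx$.

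I would treat the second identity first, as it is the cleanest. Integrating $\int_X p\,u^{\varepsilon}u^{\varepsilon}_{3x}\,dx$ by parts three times and discarding the (vanishing) boundary terms, I would move all derivatives off $u^{\varepsilon}_{3x}$, producing $\int_X p'\,[u^{\varepsilon}_x]^2\,dx$ and $\int_X p''\,u^{\varepsilon}u^{\varepsilon}_x\,dx$; rewriting the latter as $-\tfrac12\int_X p'''\,[u^{\varepsilon}]^2\,dx$ via one more integration by parts yields exactly $\tfrac32\int_X p'\,[u^{\varepsilon}_x]^2\,dx-\tfrac12\int_X p'''\,[u^{\varepsilon}]^2\,dx$, an equality. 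For the first inequality I would integrate $\int_X p\,u^{\varepsilon}u^{\varepsilon}_{4x}\,dx$ by parts twice to reach $\int_X (p\,u^{\varepsilon})''\,u^{\varepsilon}_{2x}\,dx$, then expand $(p\,u^{\varepsilon})''=p\,u^{\varepsilon}_{2x}+2p'u^{\varepsilon}_x+p''u^{\varepsilon}$. The first summand is the good term $\int_X p\,[u^{\varepsilon}_{2x}]^2\,dx$; the term $2\int_X p'u^{\varepsilon}_xu^{\varepsilon}_{2x}\,dx=-\int_X p''[u^{\varepsilon}_x]^2\,dx$ (up to boundary terms) is absorbed into $-C_2\int_X p'[u^{\varepsilon}_x]^2\,dx$ using $|p''|\leq C_2 p'$; and $\int_X p''u^{\varepsilon}u^{\varepsilon}_{2x}\,dx$ is controlled by Young's inequality as $\geq -\tfrac14\int_X p\,[u^{\varepsilon}_{2x}]^2\,dx-C_1|u^{\varepsilon}|^2_{L^2(X)}$. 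Collecting terms leaves a coefficient $\tfrac34>\tfrac12$ in front of $\int_X p\,[u^{\varepsilon}_{2x}]^2\,dx$, which gives the stated bound with room to spare.

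For the third (nonlinear) estimate I would use $[u^{\varepsilon}]^2u^{\varepsilon}_x=\tfrac13\big([u^{\varepsilon}]^3\big)_x$, so that one integration by parts gives $\int_X p\,[u^{\varepsilon}]^2u^{\varepsilon}_x\,dx=-\tfrac13\int_X p'\,[u^{\varepsilon}]^3\,dx$. It then remains to bound $\tfrac13\int_X p'|u^{\varepsilon}|^3\,dx$ from above. Estimating $\int_X p'|u^{\varepsilon}|^3\,dx\leq \|p'\|_{\infty}\,|u^{\varepsilon}|_{L^{\infty}(X)}\,|u^{\varepsilon}|^2_{L^2(X)}$ and invoking the one-dimensional interpolation inequality $|u^{\varepsilon}|_{L^{\infty}(X)}\leq c\,|u^{\varepsilon}|^{1/2}_{L^2(X)}|u^{\varepsilon}|^{1/2}_{H^1(X)}$, I would apply Young's inequality to split off an arbitrarily small multiple of $\int_X[u^{\varepsilon}_x]^2\,dx$, which (since $p'\geq\alpha_X$) is at most $\tfrac12\int_X p'[u^{\varepsilon}_x]^2\,dx$ after choosing the split constant, leaving only positive powers of $|u^{\varepsilon}|_{L^2(X)}$; these are dominated by $C_3\big(1+|u^{\varepsilon}|^6_{L^2(X)}\big)$. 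Rearranging produces the claimed lower bound. I expect the genuine obstacle to be the justification that \emph{all} boundary terms vanish: each integration by parts above generates traces such as $[p\,u^{\varepsilon}u^{\varepsilon}_{3x}]$, $[(p\,u^{\varepsilon})'u^{\varepsilon}_{2x}]$, $[p'[u^{\varepsilon}_x]^2]$ and $[p\,[u^{\varepsilon}]^3]$ at the endpoints of $X$, and controlling these requires the precise boundary/decay hypotheses on $u^{\varepsilon}$ adopted in \cite{Deb}. The secondary point of care is the bookkeeping in the first inequality, where one must choose the Young parameter so that the coefficient of the good term $\int_X p\,[u^{\varepsilon}_{2x}]^2\,dx$ stays strictly above $\tfrac12$ while the lower-order contributions land on the correct (weighted) side.
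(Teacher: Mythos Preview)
The paper does not give its own proof of this lemma: it is quoted verbatim from \cite[p.~242]{Deb} and used as a black box. Your integration-by-parts derivation is the standard route and almost certainly the one de~Bouard--Debussche follow; in particular, your computations for all three estimates are correct (for the second identity the three integrations by parts indeed produce exactly $\tfrac32\int_X p'[u^{\varepsilon}_x]^2\,dx-\tfrac12\int_X p'''[u^{\varepsilon}]^2\,dx$; for the first, your Young split leaves $\tfrac34\int_X p[u^{\varepsilon}_{2x}]^2\,dx$, more than enough; for the third, Gagliardo--Nirenberg plus Young gives a power $|u^{\varepsilon}|_{L^2}^{10/3}$, which is dominated by $1+|u^{\varepsilon}|_{L^2}^{6}$).

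Your caveat about boundary terms is well placed and is in fact the one point on which the present paper is silent: in \cite{Deb} the setting is the full line, where decay kills the traces, whereas here $X=[x_1,x_2]$ is bounded and the paper does not specify boundary conditions that would annihilate terms like $[p\,u^{\varepsilon}u^{\varepsilon}_{3x}]_{x_1}^{x_2}$ or $[p'[u^{\varepsilon}_x]^2]_{x_1}^{x_2}$. So your reconstruction is correct modulo exactly the hypothesis you flagged, and the paper offers no further help on that point beyond the citation.
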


Similarly as in Lemma \ref{DebL1}, one has
\begin{equation}\nonumber
\begin{aligned}
\int_{X} p(x)u^{\varepsilon}(t,x)u^{\varepsilon}_{2x}(t,x) d x = & \frac{1}{2}\int_{X}p''(x)\left[u^{\varepsilon}(t,x)\right]^{2}d x - \int_{X}p(x)\left[u_{x}(t,x)\right]^{2}d x .
\end{aligned}
\end{equation}

These estimations imply
\begin{equation}\nonumber
\begin{aligned}
&\left\langle F'(u^{\varepsilon}(t,x));\varepsilon u_{4x}^{\varepsilon}(t,x) + Au^{\varepsilon}(t,x)u_{x}^{\varepsilon}(t,x) + B u^{\varepsilon}_{3x}(t,x) - C u^{\varepsilon}_{2x}(t,x) + D u^{\varepsilon}(t,x)\right\rangle \\
\geq & \varepsilon\int_{X}p(x)\left[u^{\varepsilon}_{2x}(t,x)\right]^{2}d x - 2\varepsilon C_{1}\left|u^{\varepsilon}(t,x)\right|^{2}_{L^{2}(X)} - 2\varepsilon C_{2}\int_{X}p'(x)\left[u_{x}(t,x)\right]^{2}d x \\
& + 3B\int_{X}p'(x)\left[u^{\varepsilon}_{x}(t,x)\right]^{2}d x - B\int_{X}p'''(x)\left[u^{\varepsilon}(t,x)\right]^{2}d x - 2A C_{3}\left(1+ \left|u^{\varepsilon}(t,x)\right|_{L^{2}(X)}^{6}\right) \\
& - A \int_{X}p'(x)\left[u_{x}(t,x)\right]^{2}d x - C \int_{X}p''(x)\left[u^{\varepsilon}(t,x)\right]^{2}d x + 2 C \int_{X}p(x)\left[u_{x}(t,x)\right]^{2}d x \\
& + 2 D \int_{X}p(x)\left[u(t,x)\right]^{2}d x \\
\geq & \varepsilon\int_{X}p(x)\left[u^{\varepsilon}_{2x}(t,x)\right]^{2}d x - 2\varepsilon C_{1}\left|u^{\varepsilon}(t,x)\right|^{2}_{L^{2}(X)} + \left(3B - A - 2\varepsilon C_{2}\right) C_{2}\int_{X}p'(x)\left[u_{x}(t,x)\right]^{2}d x \\
& - \int_{X}\left[Bp'''(x)+Cp''(x)\right]\left[u^{\varepsilon}(t,x)\right]^{2}d x - 2A C_{3}\left(1+ \left|u^{\varepsilon}(t,x)\right|_{L^{2}(X)}^{6}\right) \\
\geq & \varepsilon\int_{X}p(x)\left[u^{\varepsilon}_{2x}(t,x)\right]^{2}d x + \left(3B - A - 2\varepsilon C_{2}\right) \int_{X}p'(x)\left[u_{x}(t,x)\right]^{2}d x \\
& - \left(\gamma + 2\varepsilon C_{1}\right)\left|u^{\varepsilon}(t,x)\right|^{2}_{L^{2}(X)} - 2A C_{3}\left(1+ \left|u^{\varepsilon}(t,x)\right|_{L^{2}(X)}^{6}\right).
\end{aligned}
\end{equation}
Let $\varepsilon \leq \min\left\{\frac{3B-A-1}{2C_{2}}, -\frac{1 +\gamma}{2C_{1}}\right\}$. Then
\begin{equation}\label{DebL2}
\begin{aligned}
&\left\langle F'(u^{\varepsilon}(t,x));\varepsilon u_{4x}^{\varepsilon}(t,x) + Au^{\varepsilon}(t,x)u_{x}^{\varepsilon}(t,x) + B u^{\varepsilon}_{3x}(t,x) - C u^{\varepsilon}_{2x}(t,x) + D u^{\varepsilon}(t,x)\right\rangle \\
\geq & \varepsilon\int_{X}p(x)\left[u^{\varepsilon}_{2x}(t,x)\right]^{2}d x + \int_{X}p'(x)\left[u_{x}(t,x)\right]^{2}d x + \left|u^{\varepsilon}(t,x)\right|^{2}_{L^{2}(X)}\\
& - 2A C_{3}\left(1+ \left|u^{\varepsilon}(t,x)\right|_{L^{2}(X)}^{6}\right) \\
\geq & \varepsilon\int_{X}p(x)\left[u^{\varepsilon}_{2x}(t,x)\right]^{2}d x + \int_{X}p'(x)\left[u_{x}(t,x)\right]^{2}d x - 2A C_{3}\left(1+ \left|u^{\varepsilon}(t,x)\right|_{L^{2}(X)}^{6}\right).
\end{aligned}
\end{equation}
Let $\left\{e_{i}\right\}_{i\in\mathbb{N}}$ be an orthonormal basis in
 $L^{2}(X)$. Then, there exists a constant $C_{4}>0$ such that
\begin{equation}\label{DebL3}
\begin{aligned}
\text{Tr}\left(F''(u)\Phi(u)\left[\Phi(u)\right]^{*}\right) =& 2\sum_{i\in\mathbb{N}} \int_{X}p(x)\left|\Phi\left(u^{\varepsilon}(t,x)\right)e_{i}(x)\right|^{2} d x \leq C_{4}\left|\Phi\left(u^{\varepsilon}(t,x)\right)\right|^{2}_{L_{0}^{2}\left(L^{2}(X)\right)} \\
\leq & C_{4}\left(\kappa_{1}\left|u^{\varepsilon}(t,x)\right|_{L^{2}(X)}^{2}+\kappa_{2}\right)^{2}.
\end{aligned}
\end{equation}
From (\ref{DebL2}) and (\ref{DebL3}) we have
\begin{equation}\nonumber
\begin{aligned}
\mathbb{E}F(u^{\varepsilon}(t,x)) \leq & F\left(u^{\varepsilon}_{0}\right) - \varepsilon\mathbb{E}\int_{0}^{t}\int_{X}p(x)\left[u^{\varepsilon}_{2x}(t,x)\right]^{2}d xd t - \mathbb{E}\int_{0}^{t}\int_{X}p'(x)\left[u_{x}(t,x)\right]^{2}d xd t\\
& +2AC_{3} \mathbb{E}\int_{0}^{t}\left(1+ \left|u^{\varepsilon}(t,x)\right|_{L^{2}(X)}^{6}\right)d t + C_{4}\mathbb{E}\left(\kappa_{1}\left|u^{\varepsilon}(t,x)\right|^{2}_{L^{2}(\mathbb{R})}+\kappa_{2}\right)^{2}, \\
\end{aligned}
\end{equation}
so
\begin{equation}\nonumber
\begin{aligned}
&\mathbb{E}F(u^{\varepsilon}(t,x)) + \varepsilon\mathbb{E}\int_{0}^{t}\int_{X}p(x)\left[u^{\varepsilon}_{2x}(t,x)\right]^{2}d xd t + \mathbb{E}\int_{0}^{t}\int_{X}p'(x)\left[u_{x}(t,x)\right]^{2}d xd t  \\
\leq & F\left(u^{\varepsilon}_{0}\right) +2AC_{3} \mathbb{E}\int_{0}^{t}\left(1+ \left|u^{\varepsilon}(t,x)\right|_{L^{2}(X)}^{6}\right)d t + C_{4}\mathbb{E}\left(\kappa_{1}\left|u^{\varepsilon}(t,x)\right|^{2}_{L^{2}(\mathbb{R})}+\kappa_{2}\right)^{2} \\
\leq & F\left(u^{\varepsilon}_{0}\right) +2AC_{3} \mathbb{E}\int_{0}^{T}\left(1+ \left|u^{\varepsilon}(t,x)\right|_{L^{2}(X)}^{6}\right)d t + C_{4}\mathbb{E}\left(\kappa_{1}\left|u^{\varepsilon}(t,x)\right|^{2}_{L^{2}(\mathbb{R})}+\kappa_{2}\right)^{2} \\
\leq & F\left(u^{\varepsilon}_{0}\right) +2AC_{3} \mathbb{E}\int_{0}^{T}\left(1+ C_{5}\right)d t + C_{6} = F\left(u^{\varepsilon}_{0}\right) + 2AC_{3}T(1+C_{5}) + C_{6} \leq C_{7} .
\end{aligned}
\end{equation}
Let $\varepsilon_{0}>0$ be fixed. Then for all $0<\varepsilon<\varepsilon_{0}$ one has
\begin{equation}\nonumber
\begin{aligned}
\varepsilon & \mathbb{E}\left( \left|u^{\varepsilon}(t,x)\right|^{2}_{L^{2}(0,T;H^{2}(X))} \right) =  \varepsilon \mathbb{E}  \int_{0}^{T} \int_{X} \left[u^{\varepsilon}(t,x)\right] ^{2} d x d t  + \varepsilon \mathbb{E}  \int_{0}^{T} \int_{X} \left[u_{2x}^{\varepsilon}(t,x)\right] ^{2} d x d t  \\ &
\leq  \varepsilon C_{8} + \varepsilon \mathbb{E}  \int_{0}^{T} \int_{X} \left[u_{2x}^{\varepsilon}(t,x)\right] ^{2} d x d t 
=  \varepsilon C_{8} + \varepsilon \mathbb{E}  \int_{0}^{T} \int_{X}\frac{1}{p(x)} p(x) \left[u_{2x}^{\varepsilon}(t,x)\right] ^{2} d x d t \\ &
\leq \varepsilon C_{8} + \varepsilon \mathbb{E}  \int_{0}^{T} \int_{X}\frac{1}{\delta} p(x) \left[u_{2x}^{\varepsilon}(t,x)\right] ^{2} d x 
\leq  \varepsilon C_{8} + \frac{1}{\delta} \varepsilon  \mathbb{E}  \int_{0}^{T} \int_{X} p(x) \left[u_{2x}^{\varepsilon}(t,x)\right] ^{2} d x \\ &
\leq  \varepsilon C_{8} + \frac{1}{\delta} C_{7} \leq C_{9}(\varepsilon+\frac{1}{\delta}) \leq C_{9}(\varepsilon_{0}+\frac{1}{\delta}),
\end{aligned}
\end{equation}
which proves the formula (\ref{4.1a}). Moreover, we have
\begin{equation}\nonumber
\begin{aligned}
\mathbb{E}&\left( \left|u^{\varepsilon}(t,x)\right|^{2}_{L^{2}(0,T;H^{1}(-k,k))} \right) =  \mathbb{E}  \int_{0}^{T} \int_{-k}^{k} \left[u^{\varepsilon}(t,x)\right] ^{2} d x d t  + \mathbb{E}  \int_{0}^{T} \int_{-k}^{k} \left[u_{x}^{\varepsilon}(t,x)\right] ^{2} d x d t  \\   & 
\leq C_{10} + \mathbb{E}  \int_{0}^{T} \int_{-k}^{k} \left[u_{x}^{\varepsilon}(t,x)\right] ^{2} d x 
\leq C_{10} + \mathbb{E}  \int_{0}^{T} \int_{X} \left[u_{x}^{\varepsilon}(t,x)\right] ^{2} d x \\ &
\leq C_{10} + \mathbb{E}  \int_{0}^{T} \int_{X} \frac{1}{p'(x)} p'(x) \left[u_{x}^{\varepsilon}(t,x)\right] ^{2} d x .
\end{aligned}
\end{equation}
Since $p'(x)$ is bounded from below on every compact set $X$  by positive number $\alpha_{X}>0$, then
\begin{equation}\nonumber
\begin{aligned}
\mathbb{E}\left( \left|u^{\varepsilon}(t,x)\right|^{2}_{L^{2}(0,T;H^{1}(-k,k))} \right)  &
\leq C_{10} + \frac{1}{\alpha_{X}} \mathbb{E}  \int_{0}^{T} \int_{X}  p'(x) \left[u_{x}^{\varepsilon}(t,x)\right] ^{2} d x 
\leq C_{10} + \frac{1}{\alpha_{X}} C_{7} \leq C_{11}. 
\end{aligned}
\end{equation}
This proves inequality (\ref{4.1c}).
\end{proof}

\begin{proof}[Proof of Lemma \ref{PropCias}]
Let $k\in X_{k}$ be arbitrary and fixed and let $0<\varepsilon<\varepsilon_{0}\leq 1$. Then
\begin{equation}\label{dec}
\begin{aligned}
u^{\varepsilon}(t,x) & = u_{0}^{\varepsilon}(x) - \! \int_{0}^{t} \!\bigg[\varepsilon u^{\varepsilon}_{4x}(t,x) + Au^{\varepsilon}(t,x)u^{\varepsilon}_{x}(t,x) + Bu^{\varepsilon}_{3x}(t,x) \\ & \hspace{14ex}
- C u^{\varepsilon}_{2x}(t,x) + Du^{\varepsilon}(t,x) \bigg] d s + \!\int_{0}^{t} \!\left( \Phi(u^{\varepsilon}(s,x)) \right) d W(s) .
\end{aligned}
\end{equation}

Denote
\begin{equation}\nonumber
\begin{aligned}
J_{1} & :=  u_{0}^{\varepsilon}(x) ; & 
J_{2}:= - \varepsilon\int_{0}^{t} u^{\varepsilon}_{4x}(t,x)  d s ; \\
J_{3} & :=- A\int_{0}^{t} u^{\varepsilon}(s,x)u^{\varepsilon}_{x}(s,x) d s ;\quad  &
J_{4} :=  - B\int_{0}^{t} u^{\varepsilon}_{3x}(t,x) d s ; \\
J_{5} & :=  C\int_{0}^{t} u^{\varepsilon}_{2x}(t,x) d s ; &
J_{6} := - D\int_{0}^{t} u^{\varepsilon}(t,x)  d s ; \\
J_{7} & := \int_{0}^{t} \left( \Phi(u^{\varepsilon}(s,x)) \right) d W(s) .
\end{aligned}
\end{equation}

Now, we start estimating the above terms.

From the assumption, ~$\mathbb{E} \left|J_{1}\right|^{2}_{W^{1,2}(0,T;H^{-2}(-k,k))} = C_{1}$, where $C_{1}>0$.

Next, there exists a constant  $C_{2} > 0$, that
\begin{equation}\nonumber
\left| - \varepsilon u^{\varepsilon}_{4x}(t,x)  \right|_{H^{-2}(-k,k)} = 
 \varepsilon \left| u^{\varepsilon}_{4x}(t,x)  \right|_{H^{-2}(-k,k)} 
 \leq C_{2}\varepsilon\left| u^{\varepsilon}(s,x) \right|_{H^{2}(-k,k)}.
\end{equation}
So, due to Lemma \ref{szac4.1}, there exists a constant $C_{3}(k)>0$ such that
\begin{equation}\nonumber
\begin{aligned}
\mathbb{E} & \left| - \varepsilon u^{\varepsilon}_{4x}(t,x) \right|^{2}_{L^{2}(0,T;H^{-2}(-k,k))} =  \mathbb{E} \int_{0}^{T} \left| - \varepsilon u^{\varepsilon}_{4x}(t,x) \right|^{2}_{H^{-2}(-k,k)} d s \\ &
\leq C_{2}^{2}\varepsilon^{2} \mathbb{E} \int_{0}^{T} \left| u^{\varepsilon}(s,x) \right|^{2}_{H^{2}(-k,k)} d s 
\leq  C_{3}(k) .
\end{aligned}
\end{equation}

Therefore we can write
$\hspace{1ex} \mathbb{E} \left|J_{2}\right|^{2}_{W^{1,2}(0,T,H^{-2}(-k,k))} \leq C_{4}(k)$, where $C_{4}(k)>0$.

 Now, we will use the following result from \cite{Deb}. 
\begin{lemma}\label{interp}(\cite{Deb}, p. 243) 
There exists a constant $C_{5}(k)$ such that the following inequality holds
$\quad 
\left|u^{\varepsilon}(s,x)u^{\varepsilon}_{x}(s,x)\right|_{H^{-1}(-k,k)} \leq C_{5}(k)\left|u^{\varepsilon}(s,x)\right|^{\frac{3}{2}}_{L^{2}(-k,k)}\left|u^{\varepsilon}(s,x)\right|^{\frac{1}{2}}_{H^{1}(-k,k)}.
$
\end{lemma}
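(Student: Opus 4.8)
The plan is to exploit the divergence form of the nonlinearity and thereby reduce the claim to a one-dimensional Gagliardo--Nirenberg interpolation inequality on the interval $(-k,k)$. Writing $u^{\varepsilon}u^{\varepsilon}_{x} = \tfrac12\,\partial_x\big((u^{\varepsilon})^2\big)$, the first step is to record that differentiation maps $L^{2}(-k,k)$ continuously into $H^{-1}(-k,k)$: for every $\phi\in H^{1}_{0}(-k,k)$ and $f\in L^{2}(-k,k)$, integration by parts gives $\langle\partial_x f,\phi\rangle = -\int_{-k}^{k} f\,\phi_x\,dx$ with no boundary contribution, whence $|\partial_x f|_{H^{-1}(-k,k)}\le |f|_{L^{2}(-k,k)}$. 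Taking $f=(u^{\varepsilon})^{2}$ this yields
\begin{equation}\nonumber
\big|u^{\varepsilon}u^{\varepsilon}_{x}\big|_{H^{-1}(-k,k)} = \tfrac12\big|\partial_x (u^{\varepsilon})^{2}\big|_{H^{-1}(-k,k)} \le \tfrac12\big|(u^{\varepsilon})^{2}\big|_{L^{2}(-k,k)} = \tfrac12\,\big|u^{\varepsilon}\big|^{2}_{L^{4}(-k,k)}.
\end{equation}
Thus it remains to bound $|u^{\varepsilon}|_{L^{4}(-k,k)}^{2}$ by $C(k)\,|u^{\varepsilon}|_{L^{2}(-k,k)}^{3/2}|u^{\varepsilon}|_{H^{1}(-k,k)}^{1/2}$.

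The heart of the argument is therefore the interpolation inequality $|u^{\varepsilon}|_{L^{4}(-k,k)}\le C(k)\,|u^{\varepsilon}|_{L^{2}(-k,k)}^{3/4}|u^{\varepsilon}|_{H^{1}(-k,k)}^{1/4}$, whose exponents are fixed by scaling. I would obtain it from a pointwise bound on $|u^{\varepsilon}|_{L^{\infty}(-k,k)}$. Starting from the identity $u^{\varepsilon}(x)^{2} = u^{\varepsilon}(x_{0})^{2} + \int_{x_{0}}^{x}2u^{\varepsilon}u^{\varepsilon}_{x}\,dy$ and averaging over $x_{0}\in(-k,k)$ gives
\begin{equation}\nonumber
\big|u^{\varepsilon}\big|_{L^{\infty}(-k,k)}^{2} \le \tfrac{1}{2k}\,\big|u^{\varepsilon}\big|_{L^{2}(-k,k)}^{2} + 2\,\big|u^{\varepsilon}\big|_{L^{2}(-k,k)}\big|u^{\varepsilon}_{x}\big|_{L^{2}(-k,k)} \le C(k)\,\big|u^{\varepsilon}\big|_{L^{2}(-k,k)}\big|u^{\varepsilon}\big|_{H^{1}(-k,k)}.
\end{equation}
Combining this with the elementary bound $|u^{\varepsilon}|_{L^{4}(-k,k)}^{4}\le |u^{\varepsilon}|_{L^{\infty}(-k,k)}^{2}|u^{\varepsilon}|_{L^{2}(-k,k)}^{2}$ produces $|u^{\varepsilon}|_{L^{4}(-k,k)}^{4}\le C(k)\,|u^{\varepsilon}|_{L^{2}(-k,k)}^{3}|u^{\varepsilon}|_{H^{1}(-k,k)}$, i.e.\ exactly the desired interpolation inequality after taking fourth roots. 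Feeding this back into the first display gives the claim with $C_{5}(k)$ proportional to $C(k)^{1/2}$.

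The only genuinely delicate point is this last estimate on the bounded interval: one cannot use the homogeneous seminorm $|u^{\varepsilon}_{x}|_{L^{2}}$ alone on the right-hand side, because on a finite interval the boundary terms in the fundamental-theorem-of-calculus identity survive and must be absorbed, which is precisely why the full norm $|u^{\varepsilon}|_{H^{1}(-k,k)}$ appears and why the constant must depend on $k$. The averaging over the base point $x_{0}$ above is the device that keeps this dependence explicit and confined to $C(k)$; equivalently, one could extend $u^{\varepsilon}$ to $\mathbb{R}$ by a bounded extension operator $E$ satisfying $|Eu^{\varepsilon}|_{L^{2}(\mathbb{R})}\le C(k)|u^{\varepsilon}|_{L^{2}(-k,k)}$ and $|(Eu^{\varepsilon})_{x}|_{L^{2}(\mathbb{R})}\le C(k)|u^{\varepsilon}|_{H^{1}(-k,k)}$, apply the clean full-line Gagliardo--Nirenberg inequality $|v|_{L^{4}(\mathbb{R})}\le 2^{1/4}|v|_{L^{2}(\mathbb{R})}^{3/4}|v_{x}|_{L^{2}(\mathbb{R})}^{1/4}$, and restrict back to $(-k,k)$. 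All remaining manipulations are routine.
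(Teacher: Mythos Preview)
Your argument is correct. The paper itself does not supply a proof of this lemma; it simply quotes the inequality from \cite{Deb}, p.~243, and uses it as a black box in the proof of Lemma~\ref{PropCias}. Your write-up therefore provides more than the paper does.

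The route you take---rewriting $u^{\varepsilon}u^{\varepsilon}_{x}=\tfrac12\partial_{x}(u^{\varepsilon})^{2}$, using that $\partial_{x}:L^{2}(-k,k)\to H^{-1}(-k,k)$ has norm at most one, and then invoking the one-dimensional Gagliardo--Nirenberg inequality $|v|_{L^{4}}\le C(k)\,|v|_{L^{2}}^{3/4}|v|_{H^{1}}^{1/4}$---is exactly the standard one and matches what is done in \cite{Deb}. Your derivation of the interpolation inequality on the bounded interval via the averaged fundamental-theorem-of-calculus identity is clean and makes the $k$-dependence of the constant transparent; the alternative extension-to-$\mathbb{R}$ argument you mention is equally valid. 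No step is missing.
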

Due to Lemma \ref{interp} there exist positive constants  $C_6,C_7(k),C_{8}(k)$ such that
\begin{equation}\nonumber
\begin{aligned}
&|-A u^{\varepsilon}(s,x)u^{\varepsilon}_{x}(s,x)|_{H^{-2}(-k,k)} =  A \left| u^{\varepsilon}(s,x)u^{\varepsilon}_{x}(s,x)  \right|_{H^{-2}(-k,k)}
\leq  C_{6} A \left| u^{\varepsilon}(s,x)u^{\varepsilon}_{x}(s,x) \right|_{H^{-1}(-k,k)} \\ &
\leq A C_{7}(k) \left|u^{\varepsilon}(s,x)\right|^{\frac{3}{2}}_{L^{2}(-k,k)}\left|u^{\varepsilon}(s,x)\right|^{\frac{1}{2}}_{H^{1}(-k,k)} \\ & 
\leq  A C_{7}(k) \left|u^{\varepsilon}(s,x)\right|_{L^{2}(-k,k)}\left|u^{\varepsilon}(s,x)\right|^{\frac{1}{2}}_{L^{2}(-k,k)}\left|u^{\varepsilon}(s,x)\right|^{\frac{1}{2}}_{H^{1}(-k,k)} \\ & 
\leq A C_{7}(k) \left[\left(2k\lambda_{X}^{2}\right)^{\frac{1}{2}}\right]\left|u^{\varepsilon}(s,x)\right|^{\frac{1}{2}}_{H^{1}(-k,k)}  
\leq  A C_{8}(k) \lambda_{X}\left|u^{\varepsilon}(s,x)\right|_{H^{1}(-k,k)}  .
\end{aligned}
\end{equation}
Due to Lemma \ref{szac4.1} there exists a constant $C_{9}(k)>0$, that we can write
\begin{equation}\nonumber
\begin{aligned}
\mathbb{E}& \left|- A u^{\varepsilon}(s,x)u^{\varepsilon}_{x}(s,x) \right|^{2}_{L^{2}(0,T;H^{-2}(-k,k))} =  \mathbb{E} \int_{0}^{T}\!\!\! \left| - A u^{\varepsilon}(s,x)u^{\varepsilon}_{x}(s,x) \right|^{2}_{H^{-2}(-k,k)} d s \\ &
\leq A^{2} C_{8}^{2}(k) \lambda_{X}^{2} \mathbb{E} \int_{0}^{T}\!\!\! \left|u^{\varepsilon}(s,x)\right|^{2}_{H^{1}(-k,k)} d s 
= A^{2} C_{8}^{2}(k) \lambda_{X}^{2} \mathbb{E} \left|u^{\varepsilon}(s,x)\right|^{2}_{L^{2}(0,T;H^{1}(-k,k))} 
\leq A^{2} C_{9}(k) \lambda_{X}^{2} .
\end{aligned}
\end{equation}
Threfore, we obtain 
$\hspace{1ex}
\mathbb{E} \left|J_{3}\right|^{2}_{W^{1,2}(0,T,H^{-2}(-k,k))} \leq C_{10}(k)$, 
 where $C_{10}(k)>0$.

 Next, there exist constants $C_{11},C_{12} > 0$, that
\begin{equation}\nonumber
\begin{aligned}
\left| - B u^{\varepsilon}_{3x}(t,x)\right|_{H^{-2}(-k,k)} & =  B \left| u^{\varepsilon}_{3x}(t,x)  \right|_{H^{-2}(-k,k)}
\leq  B C_{11}\left| u^{\varepsilon}(s,x) \right|_{H^{1}(-k,k)} \\ &
\leq  B C_{12}\left| u^{\varepsilon}(s,x) \right|_{H^{2}(-k,k)}.
\end{aligned}
\end{equation}
Lemma \ref{szac4.1} implies the existence of a constant $C_{13}(k)>0$, that we can write the following estimates
\begin{equation}\nonumber
\begin{aligned}
\mathbb{E} & \left|- B u^{\varepsilon}_{3x}(t,x) \right|^{2}_{L^{2}(0,T;H^{-2}(-k,k))} =  \mathbb{E} \int_{0}^{T} \left| - B u^{\varepsilon}_{3x}(t,x) \right|^{2}_{H^{-2}(-k,k)} d s \\ &
\leq B^{2} C^{2}_{12} \mathbb{E} \int_{0}^{T} \left|u^{\varepsilon}(s,x)\right|^{2}_{H^{2}(-k,k)} d s
=  B^{2} C^{2}_{12} \mathbb{E} \left|u^{\varepsilon}(s,x)\right|^{2}_{L^{2}(0,T;H^{2}(-k,k))} \\ &
\leq  B^{2} C^{2}_{12} \mathbb{E} \left|u^{\varepsilon}(s,x)\right|^{2}_{L^{2}(0,T;H^{2}(\mathbb{R}))}
\leq  B^{2} C_{13} .
\end{aligned}
\end{equation}
So, we obtain 
$\hspace{1ex} \mathbb{E} \left|J_{4}\right|^{2}_{W^{1,2}(0,T,H^{-2}(-k,k))} \leq C_{14}$, where $C_{14}>0$.

 For some constant $C_{15} > 0$, we have
\begin{equation}\nonumber
\left| C u^{\varepsilon}_{2x}(t,x)\right|_{H^{-2}(-k,k)} =  C \left| u^{\varepsilon}_{2x}(t,x)  \right|_{H^{-2}(-k,k)}
\leq  C C_{15}\left| u^{\varepsilon}(s,x) \right|_{L^{2}(-k,k)} 
\leq  C C_{16}\left| u^{\varepsilon}(s,x) \right|_{H^{2}(-k,k)}.
\end{equation}
Lemma \ref{szac4.1} implies the existence of a constant $C_{17}(k)>0$ such that
\begin{equation}\nonumber
\begin{aligned}
\mathbb{E} & \left| C u^{\varepsilon}_{2x}(t,x) \right|^{2}_{L^{2}(0,T;H^{-2}(-k,k))} =  \mathbb{E} \int_{0}^{T} \left| C u^{\varepsilon}_{2x}(t,x) \right|^{2}_{H^{-2}(-k,k)} d s \\ &
\leq C^{2} C^{2}_{16} \mathbb{E} \int_{0}^{T} \left|u^{\varepsilon}(s,x)\right|^{2}_{H^{2}(-k,k)} d s 
= C^{2} C^{2}_{16} \mathbb{E} \left|u^{\varepsilon}(s,x)\right|^{2}_{L^{2}(0,T;H^{2}(-k,k))} \\ & 
\leq C^{2} C^{2}_{16} \mathbb{E} \left|u^{\varepsilon}(s,x)\right|^{2}_{L^{2}(0,T;H^{2}(\mathbb{R}))} \leq  C^{2} C_{17} .
\end{aligned}
\end{equation}
Hence, we received 
$\hspace{1ex} \mathbb{E} \left|J_{5}\right|^{2}_{W^{1,2}(0,T,H^{-2}(-k,k))} \leq C_{18}$, where  $C_{18}>0$.

 There exists a constant $C_{19} > 0$ such that
\begin{equation}\nonumber
\left| - D u^{\varepsilon}(t,x)\right|_{H^{-2}(-k,k)} =  D \left| u^{\varepsilon}(t,x)  \right|_{H^{-2}(-k,k)}
\leq  D C_{19} \left| u^{\varepsilon}(s,x) \right|_{H^{2}(-k,k)}.
\end{equation}
Due to Lemma \ref{szac4.1} for some constant  $C_{20}(k)>0$, we obtain
\begin{equation}\nonumber
\begin{aligned}
\mathbb{E}& \left| - D u^{\varepsilon}(t,x) \right|^{2}_{L^{2}(0,T;H^{-2}(-k,k))} =  \mathbb{E} \int_{0}^{T} \left| - D u^{\varepsilon}(t,x) \right|^{2}_{H^{-2}(-k,k)} d s \\ &
\leq D^{2} C^{2}_{19} \mathbb{E} \int_{0}^{T} \left|u^{\varepsilon}(s,x)\right|^{2}_{H^{2}(-k,k)} d s 
=  D^{2} C^{2}_{19} \mathbb{E} \left|u^{\varepsilon}(s,x)\right|^{2}_{L^{2}(0,T;H^{2}(-k,k))} \\ &
\leq D^{2} C^{2}_{19} \mathbb{E} \left|u^{\varepsilon}(s,x)\right|^{2}_{L^{2}(0,T;H^{2}(\mathbb{R}))} \leq D^{2} C_{20} .
\end{aligned}
\end{equation}
This implies  
$\hspace{1ex}\mathbb{E} \left|J_{6}\right|^{2}_{W^{1,2}(0,T,H^{-2}(-k,k))} \leq C_{21}$, where $C_{21}>0$.

 Insert in Lemma 2.1 from \cite{Gat}~ $f(s) := \Phi(u(s,x))$, $K\!=\!H\!=\!L^{2}(X)$. Then $\mathscr{I}(f)(t)\! = \!\int_{0}^{t}\Phi(u(s,x)) d W(s)$ and for all  $p\geq 1$ and $\alpha<\frac{1}{2}$ there exists a constant $C_{22}(p,\alpha)>0$, that
\begin{equation}\nonumber
\mathbb{E}\left|\int_{0}^{t}\Phi(u^{m}(s,x)) d W(s)\right|^{2p}_{W^{\alpha ( p),2p}(0,T;L^{2}(X))} \leq  C_{22}(2p,\alpha) \mathbb{E} \left( \int_{0}^{T} \left|\Phi(u^{m}(s,x))\right|^{2p}_{L_{2}^{0}(L^{2}(X))} d s \right). 
\end{equation}
Therefore, due to the condition (\ref{W1}), we can write 
\begin{equation}\nonumber
\mathbb{E}\left|\int_{0}^{t}\Phi(u^{m}(s,x)) d W(s)\right|^{2p}_{W^{\alpha,2p}(0,T;L^{2}(X))} \leq  C_{23} (p,\alpha) , \mbox{~where~} C_{23}>0.
\end{equation}
Substitution $p:=1$ in the above inequality yields
\begin{equation}\label{It\^o3}
\mathbb{E}\left|J_{7}\right|^{2}_{W^{\alpha,2}(0,T;L^{2}(X))} = \mathbb{E}\left|\int_{0}^{t}\Phi(u(s,x)) d W(s)\right|^{2}_{W^{\alpha,2}(0,T;L^{2}(X))} \leq C_{23}(2,\alpha) = C_{24}(\alpha). 
\end{equation}

Let $\beta\in\left(0,\frac{1}{2}\right)$ and $\alpha\in\left(\beta + \frac{1}{2}, \infty\right)$ be arbitrary fixed. Note, that the following relations hold
\begin{itemize}
\item[]$W^{\alpha,2}(0,T;L^{2}(\mathbb{R})) \subset W^{\alpha,2}(0,T;H^{-2}([-k,k))\quad $ and 
\item[]$W^{1,2}(0,T,H^{-2}(-k,k)) \subset W^{\alpha,2}(0,T,H^{-2}(-k,k))$.
\end{itemize}
Therefore, there exists a constant $C_{25}(\alpha) > 0$, such that
\begin{equation}\nonumber
\begin{aligned}
\mathbb{E} & \left|u^{m}(s,x)\right|_{W^{\alpha,2}(0,T,H^{-2}(-k,k))}^{2}  =  \mathbb{E}\left|\sum_{i=1}^{7} J_{i}\right|_{W^{\alpha,2}(0,T,H^{-2}(-k,k))}^{2} \leq \mathbb{E} \left( \sum_{i=1}^{7} \left|J_{i}\right|_{W^{\alpha,2}(0,T,H^{-2}(-k,k))} \right)^{2} \\
= & \mathbb{E} \left[ \sum_{i=1}^{7} \left|J_{i}\right|^{2}_{W^{\alpha,2}(0,T,H^{-2}(-k,k))} + 2\sum_{i=1}^{6} \sum_{j=i+1}^{7} \left|J_{i}\right|_{W^{\alpha,2}(0,T,H^{-2}(-k,k))}\left|J_{j}\right|_{W^{\alpha,2}(0,T,H^{-2}(-k,k))} \right]\\
\leq & \mathbb{E} \left[ \sum_{i=1}^{7} \left|J_{i}\right|^{2}_{W^{\alpha,2}(0,T,H^{-2}(-k,k))} + 2\sum_{i=1}^{6} \sum_{j=i+1}^{7} \left(\left|J_{i}\right|^{2}_{W^{\alpha,2}(0,T,H^{-2}(-k,k))} + \left|J_{j}\right|^{2}_{W^{\alpha,2}(0,T,H^{-2}(-k,k))}\right) \right]\\
= & \mathbb{E} \left[ 8 \sum_{i=1}^{7} \left|J_{i}\right|^{2}_{W^{\alpha,2}(0,T,H^{-2}(-k,k))}\right]
= 8 \sum_{i=1}^{7} \left[ \mathbb{E} \left|J_{i}\right|^{2}_{W^{\alpha,2}(0,T,H^{-2}(-k,k))} \right]
\leq C_{25}(\alpha) .
\end{aligned}
\end{equation}
Moreover, one has 
\begin{itemize}
\item[]$W^{\alpha,2}(0,T,H^{-2}(-k,k)) \subset C^{\beta}(0,T;H^{-3}_{loc}(-k,k)\quad$ and 
\item[]$W^{\alpha,2}(0,T,H^{-2}(\mathbb{R})) \subset W^{\alpha,2}(0,T,H^{-2}(-k,k)) $.
\end{itemize} 
So, there exist constants $C_{27}(k), C_{28}(k, \alpha) >0$, that
\begin{equation}\label{Cszac}
\begin{aligned}
\mathbb{E}\left|u^{\varepsilon}(s,x)\right|_{C^{\beta}(0,T;H^{-3}(-k,k)}^{2} \leq & C_{26} \mathbb{E}\left|u^{\varepsilon}(s,x)\right|_{W^{\alpha,2}(0,T,H^{-3}(-k,k))}^{2} \leq C_{27}(k,\alpha) \\
\mathbb{E}\left|u^{\varepsilon}(s,x)\right|_{W^{\alpha,2}(0,T,H^{-2}(-k,k))} \leq & C_{28}(k, \alpha).
\end{aligned}
\end{equation}
Let $\eta>0$ be arbitrary fixed. Lemma \ref{szac4.1} implies the existence of a constant $C_{30}(k)>0$,  that
\begin{equation}\label{rwnszac}
\begin{aligned}
\mathbb{E}\left|u^{\varepsilon}(s,x)\right|^{2}_{L^{2}(0,T,H^{-1}(-k,k))} \leq & C_{29}(k)\mathbb{E}\left|u^{\varepsilon}(s,x)\right|^{2}_{L^{2}(0,T,H^{-1}(\mathbb{R}))} \tilde{C}_{2} = C_{30}(k).
\end{aligned}
\end{equation}

Substituting in \cite[Lemma 2.1]{Deb} $\alpha_{k}:=\eta^{-1}2^{k} \left( C_{30}(k) + C_{27}(k,\alpha) + C_{28}(k,\alpha) \right)$ and using Markov inequality \cite[p.~114]{Pap}
for\\ $X := \left|u^{\varepsilon}(s,x)\right|^{2}_{L^{2}(0,T,H^{-1}(-k,k))} + \left|u^{\varepsilon}(s,x)\right|^{2}_{W^{\alpha,2}(0,T,H^{-2}(-k,k))} + \left|u^{\varepsilon}(s,x)\right|_{C^{\beta}(0,T;H^{-3}_{loc}(-k,k)}^{2}$ and \\ $ \varepsilon := \eta^{-1}2^{k} \left( C_{30}(k) + C_{27}(k,\alpha) + C_{28}(k,\alpha) \right)$, one obtains
\begin{equation}\nonumber
\begin{aligned}
\mathbb{P} \Big(u^{\varepsilon} \in A\left(\left\{\alpha _{k} \right\} \right) \Big) & = 
  1 - \mathbb{P} \Big( \left|u^{\varepsilon}(s,x)\right|^{2}_{L^{2}(0,T,H^{-1}(-k,k))} + \left|u^{\varepsilon}(s,x)\right|^{2}_{W^{\alpha,2}(0,T,H^{-2}(-k,k))}   \\ 
&  + \left|u^{\varepsilon}(s,x)\right|_{C^{\beta}(0,T;H^{-3}_{loc}(-k,k))}^{2}  \geq \eta^{-1}2^{k} \left( C_{30}(k) + C_{27}(k,\alpha) 
 + C_{28}(k,\alpha) \right) \Big) \\ & 
=1 - \frac{C_{30}(k) + C_{27}(k,\alpha) + C_{28}(k,\alpha)}{\eta^{-1}2^{k} \left( C_{30}(k) + C_{27}(k,\alpha) + C_{28}(k,\alpha)\right)} 
=  1 - \frac{\eta}{2^{k}} > 1 - \eta .
\end{aligned}
\end{equation}
Let $K$ be a mapping such that for  $\eta>0$,  $K\left( \eta \right) := A\left(\left\{a_{k}^{(\eta)}\right\}\right)$, where $\left\{a_{k}^{(\eta)}\right\}$ is an increasing sequence of positive numbers, which can, but does not have to, depend on  $\eta$. Note, that due to \cite[Lemma 2.1]{Deb}, the set $K(\eta)$ is compact for all  $\eta>0$. Moreover, $\mathbb{P}\left\{K\left( \eta \right)\right\} > 1-\eta$, then the family $\mathscr{L}\left(u^{\varepsilon}\right)$ is tight. 
\end{proof}

\section{Proof of Lemma \ref{parMart}} \label{dow2}
\begin{proof}
Let $\left\{e_{i}\right\}_{i\in\mathbb{N}}$ be an orthonormal basis in space  $L^{2}(X)$. Denote by $P_{m}$, for all $m\in\mathbb{N}$, the orthogonal projection on $Sp(e_{1},...,e_{m})$. Consider finite dimensional Galerkin approximation of the problem (\ref{par}) in space  $P_{m}L^{2}(X)$ in the form
\begin{equation}\label{Galerkin}
\begin{cases}
d u^{m,\varepsilon}(t,x) + \left(\varepsilon \theta\left(\frac{\left|u^{m,\varepsilon}_{4x}(t,x)\right|^{2}}{m}\right) u^{m,\varepsilon}_{4x}(t,x) + A\theta\left(\frac{\left|u^{m,\varepsilon}_{x}(t,x)\right|^{2}}{m} \right) u^{m,\varepsilon}(t,x)u^{m,\varepsilon}_{x}(t,x)\right. \\
\left.+ B\theta\left(\frac{\left|u^{m,\varepsilon}_{3x}(t,x)\right|^{2}}{m} \right)u^{m,\varepsilon}_{3x}(t,x) - C \theta\left(\frac{\left|u^{m,\varepsilon}_{2x}(t,x)\right|^{2}}{m} \right)u^{m,\varepsilon}_{2x}(t,x) + Du^{m,\varepsilon}(t,x)  \right) d t\\
=  P_{m}\Phi\left(u^{m,\varepsilon}(t,x)\right)d W^{m}(t) \\
u^{m,\varepsilon}_{0}(x) = P_{m}u^{\varepsilon}(0,x) ,
\end{cases}
\end{equation}
where $\theta\in C^{\infty}(\mathbb{R})$ fulfills conditions
\begin{equation}
\begin{cases}
\theta(\xi) = 1, \quad &\textrm{when} \quad \xi\in [0,1] \\
\theta(\xi) \in [0,1], \quad &\textrm{when} \quad \xi\in (1,2) \\
\theta(\xi) = 0, \quad &\textrm{when} \quad \xi\in \left.[2,\infty)\right. .
\end{cases}
\end{equation}

Let $m\in\mathbb{N}$ be arbitrary fixed and 
\begin{equation}\nonumber
\begin{aligned}
b(u(t,x)) := &\varepsilon \theta\left(\frac{\left|u^{m,\varepsilon}_{4x}(t,x)\right|^{2}}{m}\right) u^{m,\varepsilon}_{4x}(t,x) + A\theta\left(\frac{\left|u^{m,\varepsilon}_{x}(t,x)\right|^{2}}{m} \right) u^{m,\varepsilon}(t,x)u^{m,\varepsilon}_{x}(t,x) \\
&+ B\theta\left(\frac{\left|u^{m,\varepsilon}_{3x}(t,x)\right|^{2}}{m} \right)u^{m,\varepsilon}_{3x}(t,x) - C \theta\left(\frac{\left|u^{m,\varepsilon}_{2x}(t,x)\right|^{2}}{m} \right)u^{m,\varepsilon}_{2x}(t,x) + Du^{m,\varepsilon}(t,x)  , \\
\sigma (u(t,x)) := & P_{m}\Phi(u^{m,\varepsilon}(t,x)).
\end{aligned}
\end{equation} 
Then
\begin{equation}\nonumber
\begin{aligned}
& \left| b(u(t,x)) \right|_{L^{2}(X)} \leq \varepsilon\left| \theta\left(\frac{\left|u^{m,\varepsilon}_{4x}(t,x)\right|^{2}}{m} \right) u^{m,\varepsilon}(t,x)u^{m,\varepsilon}_{4x}(t,x) \right|_{L^{2}(X)} \\
& + A\left| \theta\left(\frac{\left|u^{m,\varepsilon}_{x}(t,x)\right|^{2}}{m} \right) u^{m,\varepsilon}(t,x)u^{m,\varepsilon}_{x}(t,x) \right|_{L^{2}(X)} 
 + B \left| \theta\left(\frac{\left|u^{m,\varepsilon}_{3x}(t,x)\right|^{2}}{m} \right)u^{m,\varepsilon}_{3x}(t,x) \right|_{L^{2}(X)} \\ &
 + C \left| \theta\left(\frac{\left|u^{m,\varepsilon}_{2x}(t,x)\right|^{2}}{m} \right)u^{m,\varepsilon}_{2x}(t,x) \right|_{L^{2}(X)} 
 + D  \left| u^{m,\varepsilon}(t,x) \right|_{L^{2}(X)} \\ &
= : \varepsilon J_{1} + AJ_{2} + BJ_{3} + CJ_{4} + DJ_{5}.
\end{aligned}
\end{equation}
Note, that
\begin{equation}\nonumber
J_{2} = 
\begin{cases}
0, \quad \textrm{when} \quad  \left|u^{m,\varepsilon}_{x}(t,x)\right| \geq \sqrt{2m} \\
\lambda  \left| u^{m,\varepsilon}(t,x)u^{m,\varepsilon}_{x}(t,x) \right|_{L^{2}(X)}, \quad \textrm{when} \quad \left|u^{m,\varepsilon}_{x}(t,x)\right| \leq \sqrt{2m} ,
\end{cases}
\end{equation}
where $\lambda \in [0,1]$, so
\begin{equation}\nonumber
\begin{aligned}
J_{2} \leq \left| u^{m,\varepsilon}(t,x)u^{m,\varepsilon}_{x}(t,x) \right|_{L^{2}(X)} \leq \sqrt{2m} \left| u^{m,\varepsilon}(t,x) \right|_{L^{2}(X)} .
\end{aligned}
\end{equation}
Similarly,
\begin{equation}\nonumber
J_{1} = 
\begin{cases}
0, \quad \textrm{when} \quad  \left|u^{m,\varepsilon}_{4x}(t,x)\right| \geq \sqrt{2m} \\
\lambda  \left|u^{m,\varepsilon}_{4x}(t,x) \right|_{L^{2}(X)}, \quad \textrm{when} \quad \left|u^{m,\varepsilon}_{4x}(t,x)\right| \leq \sqrt{2m} ,
\end{cases}
\end{equation}

\begin{equation}\nonumber
J_{3} = 
\begin{cases}
0, \quad \textrm{when} \quad  \left|u^{m,\varepsilon}_{3x}(t,x)\right| \geq \sqrt{2m} \\
\lambda  \left|u^{m,\varepsilon}_{3x}(t,x) \right|_{L^{2}(X)}, \quad \textrm{when} \quad \left|u^{m,\varepsilon}_{3x}(t,x)\right| \leq \sqrt{2m} ,
\end{cases}
\end{equation}
and
\begin{equation}\nonumber
J_{4} = 
\begin{cases}
0, \quad \textrm{when} \quad  \left|u^{m,\varepsilon}_{2x}(t,x)\right| \geq \sqrt{2m} \\
\lambda  \left|u^{m,\varepsilon}_{2x}(t,x) \right|_{L^{2}(X)}, \quad \textrm{when} \quad \left|u^{m,\varepsilon}_{2x}(t,x)\right| \leq \sqrt{2m} ,
\end{cases}
\end{equation}
where $\lambda \in [0,1]$, so
\begin{equation}\nonumber
J_{1},J_{3},J_{4} \leq \sqrt{2m}.
\end{equation}
Therefore one gets
\begin{equation}\nonumber
\begin{aligned}
& \left| b( u^{m,\varepsilon}(t,x)) \right|_{L^{2}(X)} = \varepsilon J_{1} + AJ_{2} + BJ_{3} + CJ_{4} + DJ_{5} \\
\leq & \,\varepsilon\sqrt{2m} + A \sqrt{2m} \left| u^{m,\varepsilon}(t,x) \right|_{L^{2}(X)} + B\sqrt{2m} + C\sqrt{2m} + D \left| u^{m,\varepsilon}(t,x) \right|_{L^{2}(X)} \\
= & \left(A \sqrt{2m} + D \right) \left| u^{m,\varepsilon}(t,x) \right|_{L^{2}(X)} + \sqrt{2m}\left(\varepsilon + B + C\right).
\end{aligned}
\end{equation}
Moreover, due to the condition (\ref{W1}), there exist constants $\kappa_{1}, \kappa_{2} > 0$, such that
\begin{equation}\nonumber
\left\|\Phi(u^{m}(t,x))\right\|_{L_{0}^{2}(L^{2}(X))} \leq \kappa_{1} \left|u^{m}(t,x)\right|_{L^{2}(X)} + \kappa_{2} ,
\end{equation}
so 
\begin{equation}\nonumber
\begin{aligned}
& \left| b(u^{m,\varepsilon}(t,x)) \right|_{L^{2}(X)} + \left\|\sigma(u^{m}(t,x))\right\|_{L_{0}^{2}(L^{2}(X))} \\
\leq & \left(A \sqrt{2m} + D \right) \left| u^{m,\varepsilon}(t,x) \right|_{L^{2}(X)} + \sqrt{2m}\left( \varepsilon + B + C \right) + \kappa_{1} \left|u^{m}(t,x)\right|_{L^{2}(X)}+ \kappa_{2} \\
= & \left( A \sqrt{2m} + D + \kappa_{1} \right) \left| u^{m,\varepsilon}(t,x) \right|_{L^{2}(X)} + \sqrt{2m} \left( \varepsilon + B + C \right) + \kappa_{2}. \\
\end{aligned}
\end{equation}
Let $\kappa := \max\left\{\kappa_{1},\kappa_{2}\right\}$ and $\Lambda = \max\left\{A, \varepsilon + B + C \right\}$. Then
\begin{equation}\nonumber
\begin{aligned}
& \left| b(u^{m,\varepsilon}(t,x)) \right|_{L^{2}(X)} + \left\|\sigma(u^{m}(t,x))\right\|_{L_{0}^{2}(L^{2}(X))} \\
\leq & \left( \Lambda\sqrt{2m} + \kappa + D \right) \left| u^{m,\varepsilon}(t,x) \right|_{L^{2}(X)} + \Lambda\sqrt{2m} + \kappa + D \\
= & \left( \Lambda\sqrt{2m} + \kappa + D \right)\left( \left| u^{m,\varepsilon}(t,x) \right|_{L^{2}(X)} + 1 \right).
\end{aligned}
\end{equation}
Therefore, by \cite[Proposition 3.6]{Kar}, and \cite[Proposition 4.6]{Kar}, for all $m\in\mathbb{N}$ there exists a martingale solution to (\ref{Galerkin}). Moreover, applying the same methods as in section \ref{dow1} one can show that
 for all $m$ the following inequalities hold
\begin{eqnarray} \nonumber
\exists_{C_{1}(\varepsilon)>0}\mathbb{E}\left( \left|u^{m,\varepsilon}(t,x)\right|^{2}_{L^{2}(0,T;H^{2}(X))} \right)  & \leq &\tilde{C}_{1}(\varepsilon),  \\ \nonumber
\forall_{k\in X_{k}}\exists_{C_{2}(k,\varepsilon)>0} \mathbb{E}\left( \left|u^{m,\varepsilon}(t,x)\right|^{2}_{L^{2}(0,T;H^{1}(-k,k))} \right)  & \leq &\tilde{C}_{2}(k,\varepsilon) 
\end{eqnarray}
and the family of distributions  $\mathscr{L}(u^{m,\varepsilon})$ is tight in $L^{2}(0,T;L^{2}(X))\cap C(0,T;H^{-3}(X))$.
Then application of the same methods as 
on pages \pageref{P4.1}--\pageref{dow1} leads to proof of the existence of martingale solution to  (\ref{par}) with the conditions (\ref{W1}), (\ref{W2}) and (\ref{ccond}).
\end{proof}

\end{document}